\documentclass[11pt]{article}
\usepackage{amsmath, amssymb, amsfonts, amstext, amsthm, textcomp, enumerate}
\usepackage[mathscr]{euscript}
\usepackage{float}
\usepackage{booktabs}
\usepackage{mathtools}
\usepackage[left=20mm,top=0.5in,bottom=15mm]{geometry}
\usepackage{graphicx}
\usepackage{caption}
\usepackage{epstopdf}
\usepackage{longtable}
\usepackage[utf8]{inputenc}
\usepackage{xcolor}
\usepackage{color}
\usepackage{hyperref}
\usepackage{graphicx}
\usepackage{dcolumn}% Align table columns on decimal point
\usepackage{bm}% bold math
\usepackage{epstopdf}
\usepackage[english]{babel}
\usepackage{subfigure}
\usepackage{color}
\usepackage{ulem}

\newtheorem{thm}{Theorem}[section]
\newtheorem{lem}[thm]{Lemma}%[section]
%[section]
\newtheorem{pro}[thm]{Proposition}%[section]
%[section]

\newtheorem{rem}[thm]{Remark}%[section]
\newtheorem{ex}[thm]{Example}%[section]

\newcommand{\mnf}{\mathbf{M}_n\,(\mathbb{F})}
\newcommand{\mnr}{\mathbf{M}_n\,(\mathbb{R})}
\newcommand{\mnc}{\mathbf{M}_n\,(\mathbb{C})}

\newfont{\bb}{msbm10}

\setlength{\oddsidemargin}{.75cm}
\setlength{\evensidemargin}{.5cm}
\setlength{\textwidth}{5.75in}
\setlength{\textheight}{8.75in}
\setlength{\parindent}{0pt}
\setlength{\parskip}{.1in}

\begin{document}
\allowdisplaybreaks
	\title{Linear Preservers of Real Matrix Classes Admitting a Real Logarithm
    %Exponential Representations
    }
    \author{Shaun Fallat\thanks{Department of Mathematics and Statistics,
University of Regina, Regina, SK, Canada \\
(sfallat@uregina.ca).}
    \and  Samir Mondal\thanks{Department of Mathematics and Statistics,
University of Regina, Regina, SK, Canada 
(Samir.Mondal@uregina.ca).} }
    \date{}
 \maketitle

\begin{abstract}
In real Lie theory, matrices that admit a real logarithm reside in the identity component $\mathrm{GL}_n(\mathbb{R})_+$ of the general linear group $\mathrm{GL}_n(\mathbb{R})$, with logarithms in the Lie algebra $\mathfrak{gl}_n(\mathbb{R})$. The exponential map
\[
\exp : \mnr \to \mathrm{GL}_n(\mathbb{R})
\]
provides a fundamental link between the Lie algebra and the Lie group, with the logarithm as its local inverse.

In this paper, we characterize all bijective linear maps $\varphi : \mnr \to \mnr$ that preserve the class of matrices admitting a real logarithm (principal logarithm). 
%In particular, we characterize all such maps $\varphi$ satisfying
%\[
%\varphi(e^{A}) = e^{B}
%\]
%for some $B \in M_n(\mathbb{R})$. 
We show that such maps are exactly those of the form
\[
\varphi(A) = c\, P A P^{-1}
\quad \text{or} \quad
\varphi(A) = c\, P A^{T} P^{-1},
\]
for some $P \in \mathrm{GL}_n(\mathbb{R})$ and $c > 0$.

The proof proceeds in two stages. First, we analyze preservers within the class of standard linear transformations. Second, using Zariski denseness, we prove that any bijective linear map preserving matrices with real logarithms (principal logarithm) must preserve $\mathrm{GL}_n(\mathbb{R})$, which then implies the map is of the standard form.

\end{abstract}

\vspace{1cm}
\noindent Keywords: Linear preservers, matrix exponential, matrix logarithm, Zariski topology.

\noindent AMS-MSC: 15A86, 15A16 (primary); 54C05, 15B48 (secondary)

\section{Introduction and Preliminaries}\label{introduction}

% Linear preserver problems focus on understanding the linear maps on matrix algebras that maintain (or preserve) certain highlighted properties of matrices. Suppose we have a property $ \mathcal{P} $ defined on the space of all $ n \times n $ matrices over a field $ \mathbb{F} $. The core question is: which linear transformations
% $$
% L: M_n(\mathbb{F}) \to M_n(\mathbb{F})
% $$
% guarantee that if a matrix $ A $ has property $ \mathcal{P} $, then $ L(A) $ will also have  property $ \mathcal{P} $? Common examples of such properties include the notions of rank, determinant, eigenvalues (spectrum), nilpotency, idempotency, and invertibility.

% A fundamental result was given by Marcus and Moyls in 1959 \cite{MarcusMoyls1959}, proving that every linear map preserving invertibility of $n\times n$ matrices is of the form $$
% L(A) = P A Q \quad \text{or} \quad L(A) = P A^t Q,
% $$ for invertible $P,Q$. Sourour \cite{Sourour1996} extended this to infinite-dimensional $L(X)$, the bounded operators on Banach space $X$, yielding analogous structure theorems for bijective invertibility preservers on operator algebras.

Linear preserver problems continue to be an active research area in matrix and operator theory for more than a century. They focus on describing a general form of all linear transformations defined on a matrix algebra that preserve certain functions, subsets, relations, or structural properties, etc.. 

% Frobenius's 1897 paper \cite{Frobenius1897} initiated this line of research by characterizing linear maps on $n \times n$ complex matrices that preserve the determinant, that is, $\det \varphi(x) = \det x$ for all $x$. This problem continues to attract considerable attention in both matrix theory and operator theory, as reflected in modern surveys \cite{GutermanLiSemrl2000,LiPierce2001, Molnar2007} and their substantial citation impact.

Frobenius's trans-formative 1897 paper \cite{Frobenius1897} marked the beginning of this line of research by characterizing all linear maps on $n \times n$ complex matrices that preserve the determinant function, that is, $\det \varphi(x) = \det x$ for all $x$. After this important ground-breaking work, linear preserver problems have remained an active and compelling research area in both matrix theory and operator theory, as documented in modern surveys \cite{Molnar2007, GutermanLiSemrl2000,LiPierce2001}. Over the decades, this research topic has expanded significantly, with notable key developments reported in the works \cite{BermanHershkowitzJohnson1985, Marcus1959, JafarianSourour1986, MarcusMoyls1959, Sourour1996, Shitov2021, Shitov2023}.

A fundamental result of Marcus and Moyls~\cite{MarcusMoyls1959} states that every linear map on 
$\mnf$ (with $\mathbb{F}=\mathbb{R}$ or $\mathbb{C}$) that preserves the property of being invertible is necessarily of the form
\begin{equation}\label{standardtransformations}
   \varphi(A) = PAQ \qquad \text{or} \qquad \varphi(A) = PA^{t}Q, 
\end{equation}
for some invertible matrices $P,Q \in \mathrm{GL}_n(\mathbb{F})$. Such mappings are referred to as {\it standard transformations (or transformations of standard form}) \cite{LiPierce2001}.
This finite-dimensional result was extended by Sourour~\cite{Sourour1996} to the infinite-dimensional setting of $L(X)$, the algebra of bounded linear operators on a Banach space $X$, where analogous structure theorems for bijective invertibility-preserving linear maps were obtained. Previously, a partial result of Choi, Jafarian, and Radjavi~\cite{ChoiEtAl1984} was positive linear maps in C$^*$-algebras that preserves invertibility.

The {\it exponential of a matrix $A$} is defined by the infinite series  
\[
e^{A}= \sum_{n=0}^{\infty} \frac{A^n}{n!},
\]
which converges for all square matrices $A$, ensuring that the matrix exponential is well-defined. Given a matrix $B$, a matrix $A$ is called a {\it logarithm of $B$} if it satisfies the equation  
\[
e^{A} = B.
\]

Determining whether a given matrix has a logarithm is a classical problem in matrix analysis. In the complex setting, this problem is well understood and can be summarized as follows: A matrix $A \in \mnc$ has a logarithm if and only if it is invertible \cite[Theorem 2.10]{hall}. Although the logarithm of a matrix may not be unique in general, if the matrix $A$ has no negative real eigenvalues, then it admits a unique logarithm whose eigenvalues $\lambda$ satisfy the bound $|\mathrm{Im}(\lambda)| < \pi$. This unique logarithm is called the {\it principal logarithm} (see also \cite{higham2008functions}).

In the real setting, the existence of a real logarithm for a given real matrix $A$ is a more subtle problem due to the constraints imposed by the real eigenvalues and the structure of the matrix. Culver \cite{culver} provided a complete characterization of when a real matrix possesses a real logarithm. More precisely, the following result gives the necessary and sufficient conditions for the existence of a real logarithm.

\begin{thm}[{\cite[Theorem 1]{culver}}]\label{existreallog}
A real matrix $A \in \mnr$ has a real logarithm, that is, there exists a real matrix $X$ such that $A = e^X$, if and only if the following conditions hold:  
\begin{enumerate}
\item $A$ is invertible.  
\item Each elementary divisor (Jordan block) corresponding to a negative eigenvalue of $A$ occurs an even number of times.
\end{enumerate}
\end{thm}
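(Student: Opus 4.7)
The plan is to prove both directions by reducing to the real Jordan canonical form and analyzing each block separately.

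For necessity, suppose $A = e^{X}$ with $X \in \mnr$. Then $\det A = e^{\mathrm{tr}(X)} > 0$, so $A$ is invertible. To analyze the Jordan structure, I would pass to the real Jordan form of $X$. Real Jordan blocks of $X$ exponentiate in a controlled way: a block $J_k(\mu)$ of $X$ with $\mu \in \RR$ produces, after exponentiation, a Jordan block of $A$ with eigenvalue $e^{\mu} > 0$ of the same size $k$, while a real Jordan block corresponding to a complex conjugate pair $\alpha \pm i\beta$ produces (over $\CC$) a pair of complex-conjugate Jordan blocks with eigenvalues $e^{\alpha \pm i\beta}$ of equal size. A negative eigenvalue of $A$ can arise only from the second case with $\beta \equiv \pi \pmod{2\pi}$, in which case the two complex-conjugate blocks collapse to the \emph{same} negative eigenvalue $-e^{\alpha}$ and have identical sizes. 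Consequently, every Jordan block of $A$ corresponding to a negative eigenvalue occurs in even multiplicity.

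For sufficiency, write $A = S\, B\, S^{-1}$ with $B$ in real Jordan canonical form and $S \in \mathrm{GL}_n(\RR)$; it suffices to build a real logarithm of $B$ block-by-block and then conjugate back. For a Jordan block $J_k(\lambda)$ with $\lambda > 0$, factor $J_k(\lambda) = \lambda(I + N)$ with $N$ nilpotent of index $k$ and set
\[
\log J_k(\lambda) = (\log \lambda) I + \sum_{j=1}^{k-1} \frac{(-1)^{j+1}}{j}\, N^{j},
\]
a real matrix whose exponential returns $J_k(\lambda)$ by the usual formal identity $\exp \circ \log = \mathrm{id}$ applied to the truncated series. For a real Jordan block associated to a complex conjugate pair of nonreal eigenvalues, one takes a complex logarithm of the underlying complex Jordan block via the same nilpotent-series trick and realifies. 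The delicate case is a pair $J_k(-\lambda) \oplus J_k(-\lambda)$ with $\lambda > 0$: I would identify this $2k \times 2k$ real matrix with a complex $k \times k$ Jordan block $J_k(-\lambda)$ acting on $\CC^{k} \cong \RR^{2k}$ through the standard realification $a + ib \mapsto \left(\begin{smallmatrix} a & -b \\ b & a \end{smallmatrix}\right)$, compute its complex logarithm using $\log(-\lambda) = \log \lambda + i\pi$ together with the nilpotent series above, and realify back to obtain a real $2k \times 2k$ logarithm.

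The main obstacle will be the paired negative-eigenvalue case. Concretely, one must show that two equal real Jordan blocks $J_k(-\lambda)$ can genuinely be bundled (up to real similarity) into the realification of a single complex Jordan block $J_k(-\lambda)$, so that the realified complex logarithm yields a matrix in $\mnr$ whose exponential is similar, by a \emph{real} matrix, to $J_k(-\lambda) \oplus J_k(-\lambda)$. This requires building the explicit change-of-basis matrix between the two real forms and verifying its invertibility. Once this identification is in place, the remaining steps are a routine block-diagonal assembly followed by conjugation by $S$ to produce a real logarithm of $A$.
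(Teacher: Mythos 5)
The paper states this result as Culver's theorem and cites \cite{culver} without reproducing a proof, so there is no internal argument to compare against. Your Jordan-form strategy is the standard one (and essentially Culver's), and the outline you give is correct: the necessity direction correctly tracks that any Jordan block of $A=e^X$ with a negative eigenvalue must come from a conjugate pair $J_k(\mu),J_k(\bar\mu)$ of Jordan blocks of $X$ with $\mu\notin\mathbb{R}$ and $e^\mu<0$, forcing even multiplicity; and the sufficiency direction correctly reduces to producing a real logarithm for each block of the real Jordan form.

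The only thing worth flagging is that the ``main obstacle'' you describe is lighter than you make it sound: you do not need to construct the change-of-basis matrix explicitly. The realification $R(C)$ of a complex $k\times k$ matrix $C$ (replacing each entry $a+ib$ by $\bigl(\begin{smallmatrix}a&-b\\ b&a\end{smallmatrix}\bigr)$) is similar over $\mathbb{C}$ to $C\oplus\overline{C}$. Taking $C=J_k(-\lambda)$, which has real entries so $\overline{C}=C$, shows that $R(J_k(-\lambda))$ and $J_k(-\lambda)\oplus J_k(-\lambda)$ have identical complex Jordan forms. By the classical fact that two real matrices similar over $\mathbb{C}$ are already similar over $\mathbb{R}$, there is a real $T$ with $T^{-1}R(J_k(-\lambda))T = J_k(-\lambda)\oplus J_k(-\lambda)$. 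Then, letting $L_0$ be the complex logarithm of $J_k(-\lambda)$ you describe (using $\log(-\lambda)=\log\lambda+i\pi$ plus the truncated nilpotent series), one has $e^{R(L_0)}=R(J_k(-\lambda))$, and $T^{-1}R(L_0)T$ is the desired real logarithm of $J_k(-\lambda)\oplus J_k(-\lambda)$. With that observation, your block-by-block assembly followed by conjugation by $S$ completes the proof.
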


This class of real matrices admitting a real logarithm plays a fundamental role in various areas of mathematics and its applications. They arise naturally in Lie theory, where the matrix exponential and logarithm connect Lie groups and Lie algebras, providing a bridge between algebraic structures and smooth manifolds. Moreover, real logarithms are essential in control theory, differential equations, and numerical analysis, especially when working with matrix functions that must remain within the real domain.

% Understanding linear preservers on the set of real matrices possessing real logarithms is crucial as it reveals the structural symmetries and invariants of this matrix class. Such investigations have implications in operator theory, matrix theory, and the study of dynamical systems, where preserving properties like invertibility, spectral characteristics, or matrix functions such as the logarithm is of prime interest.

In this paper, we consider the following two important subsets of $\mnr$:
\[
K_n = \{ A \in \mnr : \text{the principal logarithm } \log(A) \text{ exists} \},
\]
\[
K_n^{*} = \{ A \in \mnr : \exists\, X \in \mnr \text{ such that } e^{X} = A \}.
\]
We may leave off the subscript for the set above when the order is evident from context.

Our main objective is to describe all linear, bijective, maps
\[
\varphi : \mnr \to \mnr
\]
that satisfy \begin{equation}
    \varphi(S) = S,
\end{equation} where $S$ is either $K$ or $K^{*}$. That is, we  characterize all linear operators on $\mnr$
that map each of these two sets onto themselves. In fact, we prove the following general result along these lines.

\begin{thm}\label{thm:K*-preservers}
Let $\varphi : \mnr \to \mnr$ be a bijective linear map.
Then 
\[
\varphi(K^*) = K^*
\]
if and only if there exist $P \in \mathrm{GL}_n(\mathbb{R})$ and $c > 0$ such that either
\[
\varphi(A) = c\, P A P^{-1}
\qquad \text{or} \qquad
\varphi(A) = c\, P A^T P^{-1}, \quad \forall\, A \in \mnr.  \] 
\end{thm}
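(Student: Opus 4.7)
The plan is to split the theorem into its ``if'' and ``only if'' directions. The ``if'' direction is a short verification: given $\varphi(A) = cPAP^{-1}$ or $\varphi(A) = cPA^{T}P^{-1}$, I would check $\varphi(K^*) = K^*$ by combining three identities valid for $A = e^X$: scaling gives $cA = e^{X + (\log c)I}$ since $c>0$, conjugation gives $PAP^{-1} = e^{PXP^{-1}}$, and transposition gives $A^T = e^{X^T}$. Because $\varphi^{-1}$ has the same shape (with $c$ and $P$ replaced by $c^{-1}$ and $P^{-1}$), applying $\varphi(K^*) \subseteq K^*$ to both $\varphi$ and $\varphi^{-1}$ yields the equality.

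For the converse, let $\varphi : \mnr \to \mnr$ be a bijective linear map with $\varphi(K^*) = K^*$. The first and most substantive step is to show $\varphi$ preserves $\mathrm{GL}_n(\mathbb{R})$. The key ingredient is that $K$, consisting of invertible matrices with no eigenvalue on the closed negative real axis, is Euclidean-open in $\mnr$ and therefore Zariski-dense. Since $K \subseteq K^* \subseteq \mathrm{GL}_n(\mathbb{R})$, the polynomial $q(A) := \det(\varphi(A))$, which is homogeneous of degree $n$, does not vanish on $K$. Its zero locus $\{q = 0\} = \varphi^{-1}(\{\det = 0\})$ is, as the image of the irreducible hypersurface $\{\det = 0\}$ under the linear isomorphism $\varphi^{-1}$, itself an irreducible homogeneous hypersurface of dimension $n^2 - 1$ in $\mnr$. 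Combining the Zariski density of $K$ with the cone structure of $K^*$, one then argues that $\{\det = 0\}$ is the unique irreducible homogeneous hypersurface disjoint from $K$, so $\{q = 0\} = \{\det = 0\}$ and $q = c_0 \det$ for some nonzero scalar. Hence $\varphi(\mathrm{GL}_n(\mathbb{R})) = \mathrm{GL}_n(\mathbb{R})$.

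Once invertibility is preserved, the Marcus--Moyls theorem recalled as \eqref{standardtransformations} gives $\varphi(A) = PAQ$ or $\varphi(A) = PA^{T}Q$ for some $P, Q \in \mathrm{GL}_n(\mathbb{R})$. To extract the precise form, I would consider the first case and conjugate by $P^{-1}$, which preserves $K^*$, rewriting $\varphi(K^*) = K^*$ as $AR \in K^*$ for every $A \in K^*$, where $R := QP$. The goal is to show $R = cI$ with $c > 0$. Using carefully chosen test matrices in $K^*$ (for instance, diagonal matrices whose negative eigenvalues come in matched pairs so that Culver's parity condition is satisfied for $A$) and inspecting the spectrum of $AR$, one checks that any non-scalar $R$ produces an $AR$ with distinct negative eigenvalues of odd multiplicity, violating Culver. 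This compels $R = cI$; a further probe (e.g., $c < 0$ would require $-A \in K^*$ for $A$ close to the identity, which fails under small diagonal perturbation) forces $c > 0$. Hence $\varphi(A) = c PAP^{-1}$, and the transpose case follows identically via transpose-invariance of $K^*$.

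The delicate point is the Zariski-density step. Density of $K$ alone only yields that $q$ is a nonzero polynomial; upgrading this to $q = c_0 \det$ requires classifying the irreducible homogeneous hypersurfaces of $\mnr$ that are disjoint from $K$ and arguing that $\{\det = 0\}$ is the unique such hypersurface. This is where the algebro-geometric structure of $\{\det = 0\}$ must be combined with the semi-algebraic structure of $K^*$, and it is the main obstacle of the proof.
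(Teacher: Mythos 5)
Your high-level plan --- establish Zariski density of $K$ to force $\varphi(\mathrm{GL}_n(\mathbb{R})) = \mathrm{GL}_n(\mathbb{R})$, then invoke Marcus--Moyls and analyze $R := QP$ spectrally --- coincides with the paper's strategy (the paper runs the two halves in the opposite order, which is immaterial). But there are two genuine gaps. The first you flag yourself: density of $K$ only tells you $q := \det\circ\varphi$ is a nonzero degree-$n$ polynomial, and to get $\{q=0\}=\{\det=0\}$ you appeal to the assertion that $\{\det=0\}$ is the unique irreducible homogeneous hypersurface of $\mnr$ disjoint from $K$, which you leave unproved. That is not an obvious fact and is exactly the missing ingredient; the paper takes a terser route here, arguing that since linear bijections are Zariski homeomorphisms and $\varphi(K^*)=K^*$, the map $\varphi$ preserves the Zariski closure $\overline{K^*}^{\,Z}$, which it identifies with $\mathrm{GL}_n(\mathbb{R})$.

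The second gap is concrete and not flagged: your proposed test matrices --- diagonal with negative eigenvalues in matched pairs --- cannot rule out a non-real eigenvalue of $R$. Suppose $R$ has a real $2\times 2$ invariant block similar to $r R_\theta$ with $r>0$ and $0<\theta<\pi/2$. For positive diagonal $A=\mathrm{diag}(a,b)$ the block of $AR$ is $r\begin{pmatrix} a\cos\theta & -a\sin\theta\\ b\sin\theta & b\cos\theta\end{pmatrix}$, with trace $r(a+b)\cos\theta>0$ and determinant $r^2ab>0$, so its eigenvalues are either a non-real conjugate pair or two positive reals --- never two negative reals --- and membership in $\overline{K^*}$ is not violated; the only diagonal choice on this block with a matched negative pair is $A=aI_2$ with $a<0$, which gives $a r R_\theta$, again a non-real conjugate pair. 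The paper handles this with a unipotent shear $A_\theta=\begin{pmatrix} 1 & b_\theta\\ 0 & 1\end{pmatrix}$ with $b_\theta$ chosen so $2\cos\theta+b_\theta\sin\theta<-2$ (Step 3.1), and it pins each $2\times 2$ principal block of $M$ down to the form $aI_2+bJ$ using rotation test matrices $R(\theta)$ (Step 4). Without test matrices of this non-diagonal type, the reduction to $R=cI$ does not go through.
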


% A fundamental result in this area was given by Marcus and Moyls in 1959 \cite{MarcusMoyls1959}. They proved that every linear map preserving the invertibility of $n \times n$ matrices (or equivalently, rank-one matrices) must be of one of the following two forms:
% $$
% L(A) = P A Q \quad \text{or} \quad L(A) = P A^t Q,
% $$
% where $ P $ and $ Q $ are invertible $n \times n$  matrices, and $ A^t $ denotes the transpose of $ A $. This characterization reveals that such invertibility preserving linear maps are essentially given by similarity transformations or transpose similarity transformations, combined with multiplication by invertible matrices.

\section{Main Results}\label{mainresults}

Recall that a matrix $A \in \mnc$ admits a logarithm precisely when it is invertible. 
Hence, every linear operator on $\mnc$ that maps this class of matrices 
(those admitting a logarithm) onto itself is thus an invertibility linear preserver. 
It follows that all such operators are of the standard form described in 
Equation~(\ref{standardtransformations}).

However, the situation is more subtle over the real field.  
In particular, the class of real matrices admitting a real logarithm is not necessarily preserved by all linear maps of the standard form unless additional conditions are satisfied.  
The following example illustrates this difference.
\begin{ex}\label{counterexample}
{\rm Consider the matrix
\[
A = \begin{pmatrix}
-1 & 0 \\
0 & -1
\end{pmatrix} \in \mathbf{M}_2(\mathbb{R}).
\]
Since the eigenvalue $-1$ appears with even multiplicity, $A$ admits a real logarithm.

Define the linear map
\[
\varphi(A) = P A Q, \quad \text{where} \quad 
P = I, \quad
Q = \begin{pmatrix}
0 & 1 \\
1 & 0
\end{pmatrix}.
\]

Then
\[
\varphi(A) = A Q = A Q = \begin{pmatrix}
-1 & 0 \\
0 & -1
\end{pmatrix}
\begin{pmatrix}
0 & 1 \\
1 & 0
\end{pmatrix}
= \begin{pmatrix}
0 & -1 \\
-1 & 0
\end{pmatrix}.
\]
The eigenvalues of $\varphi(A)$ are $1$ and $-1$, each with multiplicity one.
Since the eigenvalue $-1$ appears with odd multiplicity, $\varphi(A)$ does {not} admit a real logarithm.

Thus, $A$ has a real logarithm, but $\varphi(A)$ does not, even though $\varphi$ is of the form $P A Q$ with invertible $P,Q$.}
\end{ex}

\begin{rem}
{\rm In Example~\ref{counterexample}, we have $Q \neq cP^{-1},$ for any $c>0$.  
If we assume $Q = cP^{-1}$ for some $c>0$ in Equation~(\ref{standardtransformations}), then it follows that $A$ admits a real logarithm if and only if $\varphi(A) = cP A P^{-1}$ admits a real logarithm.

This raises a very natural question:  
Is every linear bijection that preserves the set $K^*$ necessarily of the standard form given in Equation~(\ref{standardtransformations}) with $Q = P^{-1}$ (up to positive scaling)?

Before addressing this question, we begin with a lemma that will be helpful in characterizing linear bijections preserving $K^*$ onto itself.}
\end{rem}

We begin by considering a general result concerning linear maps on $\mnr$. For any subset $X \subset \mnr$, we use the standard notation $\overline{X}$ to denote the closure of $X$ with respect to the conventional Euclidean topology on $\mnr$.

\begin{lem}\label{closuremap}
Let $L : \mnr \to \mnr$ be a bijective linear map and let $S \subset \mnr$.  
Then the following implication holds:
\[
L(S) = S 
\quad \implies \quad 
L(\overline{S}) = \overline{S}.
\]
\end{lem}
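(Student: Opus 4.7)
The plan is to exploit the fact that on the finite-dimensional space $\mnr$, every bijective linear map is automatically a homeomorphism with respect to the Euclidean topology; in particular, both $L$ and its inverse $L^{-1}$ are continuous, and $L$ maps open sets to open sets. This reduces the lemma to a purely topological statement about homeomorphisms.

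First, I would establish the forward inclusion $L(\overline{S}) \subseteq \overline{S}$. This follows from the elementary topological fact that for any continuous map $f$ and any subset $T$ one has $f(\overline{T}) \subseteq \overline{f(T)}$: explicitly, if $x \in \overline{T}$ then there is a sequence $x_k \in T$ with $x_k \to x$, and by continuity $f(x_k) \to f(x)$, so $f(x) \in \overline{f(T)}$. Applying this to $L$ with $T = S$ and invoking the hypothesis $L(S) = S$ gives
\[
L(\overline{S}) \subseteq \overline{L(S)} = \overline{S}.
\]

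For the reverse inclusion, I would use bijectivity. Since $L$ is a bijection and $L(S) = S$, we have $L^{-1}(S) = S$. The same topological fact applied to the continuous map $L^{-1}$ yields
\[
L^{-1}(\overline{S}) \subseteq \overline{L^{-1}(S)} = \overline{S},
\]
and applying the bijection $L$ to both sides gives $\overline{S} \subseteq L(\overline{S})$. Combining the two inclusions completes the proof.

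There is essentially no obstacle here; the entire argument rests on the continuity of bijective linear maps in finite dimensions together with a standard closure-image inclusion. I anticipate that the lemma will be used downstream to upgrade preservation of $K^{*}$ to preservation of $\overline{K^{*}}$, which can then be identified with a more tractable set (such as a Zariski-closed set containing $\mathrm{GL}_n(\mathbb{R})$), thereby paving the way to reduce the problem to that of an invertibility preserver and apply Marcus–Moyls.
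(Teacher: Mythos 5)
Your proof is correct and follows the same approach as the paper: use finite-dimensionality to get that $L$ and $L^{-1}$ are continuous, apply the standard inclusion $f(\overline{T}) \subseteq \overline{f(T)}$ to both $L$ and $L^{-1}$, and combine. The only additions are an explicit sequence-based justification of the closure-image inclusion and some remarks on downstream use, neither of which changes the argument.
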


\begin{proof}
Since $\mnr$ is finite-dimensional, any linear bijection $L$ is a homeomorphism; in particular, both $L$ and $L^{-1}$ are continuous.

\medskip

Because $L$ is continuous,
\[
L(\overline{S}) \subseteq \overline{L(S)} = \overline{S}.
\]
Applying the same reasoning to $L^{-1}$ gives
\[
L^{-1}(\overline{S}) \subseteq \overline{L^{-1}(S)} = \overline{S}.
\]
Applying $L$ to both sides of this inclusion yields
\[
\overline{S} \subseteq L(\overline{S}).
\]
Combining the two inclusions gives
\[
L(\overline{S}) = \overline{S}.
\]
\end{proof}

To achieve this characterization of all linear bijections $\varphi : \mnr \to \mnr$ satisfying
\[
\varphi(K^*) = K^*,
\]
we divide the solution into two key steps:
\begin{enumerate}
    \item First, we analyze the scenario where $\varphi$ is assumed to be of the standard form as described in Equation~(\ref{standardtransformations}). This allows us to understand the behavior and constraints within this well-studied framework.
    \item Next, using Zariski denseness, we establish that any such map $\varphi$ must necessarily preserve $\mathrm{GL}_n(\mathbb{R})$. This crucial step leads us to conclude that $\varphi$ is indeed of standard form. 
\end{enumerate}

It is not difficult to observe that $\overline{K^*} = \overline{K},$
from which it then follows, using Lemma \ref{closuremap}, we will have also determined the linear transformations that preserve the set \( K \). To this end, since
\[
K = \{ A \in \mnr : A \text{ has no negative eigenvalues} \}.
\]
taking closures preserves inclusion, so $\overline{K} \subseteq \overline{K^*}. $

On the other hand, consider any matrix \( A \in K^* \). If all eigenvalues of \( A \) are positive real numbers, then clearly $A \in K \subseteq \overline{K}.$
If \( A \) has a negative eigenvalue, the existence of a real logarithm implies that each negative eigenvalue must have even geometric multiplicity. In this case, there exists a sequence of matrices $\{ A_k \} \subset K$  such that $A_k \to A,$ as the negative eigenvalues of \( A \) can be approximated by complex conjugate pairs of eigenvalues that lie off the negative real axis, ensuring that each \( A_k \in K \).
Thus $A \in \overline{K},$ 
and hence $K^* \subseteq \overline{K}.$ 
Taking closures once again implies
$\overline{K^*} \subseteq \overline{K}.$ Therefore, we conclude that $\overline{K} = \overline{K^*}.$
\begin{rem}
If $A \in \overline{K}$ (or $\overline{K^*}$), then $A$ may have $0$ as an eigenvalue and may also have negative eigenvalues, each occurring with even geometric multiplicity. Consequently, $K$ is not dense in $M_n(\mathbb{R})$, nor is it dense in $\mathrm{GL}_n(\mathbb{R})$ with respect to the subspace topology.
\end{rem}

%\paragraph{Comment on the use of Zariski density.}
To the best of our knowledge, this is the first instance in the linear preserver literature where Zariski density is used to obtain such a conclusion. The underlying notions of Zariski topology and Zariski density are classical and have been extensively developed in algebraic geometry and commutative algebra. Their use here provides a clean algebraic bridge: once $K^*$ is shown to be Zariski dense in $\mathrm{GL}_n(\mathbb{R})$ (with respect to the subspace topology), any linear bijection preserving $K^*$ must preserve $\mathrm{GL}_n(\mathbb{R})$, leading directly to the standard form classification.

As is standard, if $A \in \mnf$ and $\alpha \subset \{1,2,\ldots,n\}$ is an index set, then we let $A{[\alpha]}$ denote the principal submatrix of $A$ lying in rows and columns indexed by $\alpha$ in the natural induced order. Furthermore, we let $\operatorname{spec}(A)$ denote the {\it spectrum of $A$} or the multi-set of eigenvalues of $A$.

% This two-step approach provides a clear pathway towards a complete characterization of the maps preserving the set $K^*$.
\subsection{Characterization under the Standard-Form Assumption}\label{Standard-Form}

To begin, we present a characterization of the linear bijections of the form $\varphi(A) = P A Q$ or $\varphi(A) = P A^{T} Q$ that preserve the set $K^*$ onto itself.  
The following theorem shows that such preservers must, up to a positive scalar multiple, be conjugations or transpose-conjugations by an invertible matrix.

\begin{thm}\label{thm:K^*-preservers}
    Let $\varphi: \mnr \to \mnr$ be a bijective linear map of the form 
$\varphi(A) = P A Q$ or $\varphi(A) = P A^{T} Q$. Then
\[
\varphi({K^*}) = {K^*}
\]
if and only if there exist $P \in \mathrm{GL}_n(\mathbb{R})$ and $c > 0$ such that
\[
\varphi(A) = c P A P^{-1} 
\quad \text{or} \quad 
\varphi(A) = c P A^{T} P^{-1}, 
\quad \forall\, A \in \mnr.
\]
\end{thm}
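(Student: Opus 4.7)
The statement has two directions, and I will handle both. For the straightforward direction, if $\varphi(A)=cPAP^{-1}$ with $c>0$ and $A=e^{X}\in K^{*}$ with $X$ real, then $PAP^{-1}=e^{PXP^{-1}}$ and the commuting factors $(\log c)I$ and $PXP^{-1}$ give $\varphi(A)=e^{(\log c)I+PXP^{-1}}\in K^{*}$; applying the same reasoning to $\varphi^{-1}$ yields $\varphi(K^{*})=K^{*}$. The transpose form is handled identically using $(e^{X})^{T}=e^{X^{T}}$. For the converse, the transpose case reduces to the non-transpose case: if $\varphi(A)=PA^{T}Q$ preserves $K^{*}$, then $\psi(A):=\varphi(A^{T})=PAQ$ also preserves $K^{*}$ (since $K^{*}$ is transpose-invariant), so it suffices to prove the conclusion when $\varphi(A)=PAQ$. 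Composing with the similarity $A\mapsto P^{-1}AP$, which clearly preserves $K^{*}$, the linear bijection $\tilde\varphi(A):=P^{-1}\varphi(A)P=AR$, with $R:=QP$, also preserves $K^{*}$. The task thus reduces to showing that right multiplication by $R$ can preserve $K^{*}$ only when $R=cI$ for some $c>0$ (equivalently, $Q=cP^{-1}$).

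The core step is to show $R$ is diagonal. Fix $i\neq j$ and consider the family $A_{t}:=I+tE_{ij}\in K$ for $t\in\mathbb{R}$ (triangular with all eigenvalues equal to $1$). Writing $A_{t}R-\lambda I=(R-\lambda I)+tE_{ij}R$ and applying the matrix determinant lemma to the rank-one perturbation $tE_{ij}R=t\,e_{i}(R^{T}e_{j})^{T}$, one obtains
\[
\chi_{A_{t}R}(\lambda)=p(\lambda)+t\lambda\,q_{ij}(\lambda),
\]
where $p(\lambda):=\det(R-\lambda I)$ and $q_{ij}(\lambda):=(-1)^{i+j}\det\bigl((R-\lambda I)_{\hat{i},\hat{j}}\bigr)$. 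Assume for contradiction $q_{ij}\not\equiv 0$, set $h(\lambda):=\lambda q_{ij}(\lambda)$ and $g(\lambda):=p'(\lambda)h(\lambda)-p(\lambda)h'(\lambda)$. If $g\equiv 0$ then $h=Cp$ for some constant $C$, impossible since $\deg h\leq n-1<n=\deg p$; hence $g\not\equiv 0$. Choose $\lambda_{0}<0$ outside the (finite) zero sets of $p$, $q_{ij}$, and $g$, and set $t_{0}:=-p(\lambda_{0})/(\lambda_{0}q_{ij}(\lambda_{0}))$. A direct computation then shows $\chi_{A_{t_{0}}R}(\lambda_{0})=0$ and $\partial_{\lambda}\chi_{A_{t_{0}}R}(\lambda_{0})=g(\lambda_{0})/h(\lambda_{0})\neq 0$, so $\lambda_{0}<0$ is a simple eigenvalue of $A_{t_{0}}R$, violating Culver's characterization (Theorem~\ref{existreallog}) of $K^{*}$. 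Therefore $q_{ij}\equiv 0$; the leading $\lambda^{n-2}$ coefficient of $q_{ij}$ equals $\pm R_{ji}$, so $R_{ji}=0$. Running this over every pair $i\neq j$ shows $R=\mathrm{diag}(r_{1},\ldots,r_{n})$ with each $r_{k}\neq 0$.

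To finish, I first rule out negative $r_{k}$. Taking $A=\mathrm{diag}(a_{1},\ldots,a_{n})\in K$ with positive $a_{k}$, $AR$ is diagonal with entries $a_{k}r_{k}$; if some $r_{k_{0}}<0$, then for generic $a_{k}$ the entries $a_{k}r_{k}$ are pairwise distinct, so $a_{k_{0}}r_{k_{0}}$ is a simple negative eigenvalue of $AR$, contradicting $K^{*}$. Hence every $r_{k}>0$. Next, for each pair $i\neq j$, let $A$ be the rotation by $\pi$ in the $(i,j)$-plane, i.e., the diagonal matrix with $-1$ at positions $i,j$ and $+1$ elsewhere; then $A\in K^{*}$ because $-1$ appears with (even) multiplicity $2$. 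The product $AR$ is diagonal with entries $-r_{i},-r_{j}$ at positions $i,j$ and positive $r_{k}$ elsewhere; membership in $K^{*}$ demands that its distinct negative eigenvalues have even multiplicity, forcing $-r_{i}=-r_{j}$, i.e., $r_{i}=r_{j}$. Varying the pair $(i,j)$ gives $r_{1}=\cdots=r_{n}=:c>0$, so $R=cI$ and $Q=cP^{-1}$, completing the argument.

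The main obstacle is the diagonalization step: the Wronskian/degree argument producing a simple negative real eigenvalue relies on the characteristic-polynomial identity above and the degree bound on $q_{ij}$. The positivity and equal-entry steps then follow from elementary eigenvalue accounting applied to the constrained forms of $A$.
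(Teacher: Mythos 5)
Your proof is correct, but it reaches the conclusion $QP=cI$ by a genuinely different route than the paper. The paper works with the Euclidean closure $\overline{K^*}$, first establishing that $M:=QP$ has positive real spectrum (Step~3, via the real Jordan form and a carefully chosen unipotent $2\times2$ matrix), and then pinning down each $2\times 2$ principal submatrix of $M$ by acting with rotations $R(\theta)$ and analyzing the trace, determinant, and discriminant of $R(\theta)M[\{i,j\}]$; this forces each block to be $aI_2+bJ$ and then $b=0$. Your argument instead stays entirely inside $K^*$ and proves diagonality of $R:=QP$ directly: you perturb $I$ by $tE_{ij}$, apply the matrix determinant lemma to write $\chi_{A_tR}(\lambda)=p(\lambda)+t\lambda q_{ij}(\lambda)$, and use the Wronskian $g=p'h-ph'$ together with the degree bound $\deg(\lambda q_{ij})\le n-1<n=\deg p$ to manufacture a simple negative eigenvalue whenever $q_{ij}\not\equiv 0$, contradicting Culver's criterion; then the off-diagonal entry $R_{ji}$ is read off from the top coefficient of $q_{ij}$. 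Once $R$ is known to be diagonal, your positivity and equal-entry steps (generic positive diagonal $A$, and the $\pm1$ diagonal ``rotation by $\pi$'' in the $(i,j)$-plane) are more elementary than the paper's Steps~3--5. What your approach buys: you avoid passing to closures, avoid the real Jordan form, and avoid the $2\times2$ spectral case analysis, replacing them with a single rank-one perturbation identity and a degree count. What the paper's approach buys: it extracts structural information about $M$ (invertibility of all $2\times2$ principal blocks, positivity of the spectrum) that is reusable, and it parallels the closure-based Lemma~\ref{closuremap} machinery used elsewhere in the paper. Both forward directions and the reduction of the transpose case are essentially the same. One small point worth stating explicitly in your write-up is that $h=\lambda q_{ij}\not\equiv 0$ whenever $q_{ij}\not\equiv 0$ (the polynomial ring is a domain), so the quotient $g/h$ at $\lambda_0$ is legitimate; you implicitly use this when choosing $\lambda_0$ off the zero set of $q_{ij}$.
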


\begin{proof}
($\Rightarrow$) Assume $\varphi(K^*)=K^*$.  
We treat the first case only; the transposed case follows using similar arguments. We divide the proof into sequence of steps for clarity.

\medskip
\noindent
\textbf{Step 1. Reduction to a right-multiplication form.}
Set $M:=QP\in \mathrm{GL}_n(\mathbb R).$ 
Then $P^{-1}\varphi(A)P = AM,$ 
and by $\varphi(K^*)=K^*$ 
we have 
$ A\in {K^*}$ if and only if $AM\in{K^*}, $ 
for some invertible matrix $M.$ Hence applying
Lemma \ref{closuremap}, gives 
\begin{equation}\label{conjugation}
 A\in \overline{K^*}\quad\Longleftrightarrow \quad A M\in \overline{K^*},  
\end{equation}

We now determine the structure of multiplier matrix $M$.

\medskip
\noindent
\textbf{Step 2. Each $2\times2$ principal submatrix of $M$ is invertible.}
Fix $i\ne j$ from $\{1,2,\ldots,n\}$.  
For any invertible $B\in \overline{K^*_2}$, define an $n \times n$ matrix $A$ such that $A{[\{i,j\}]} =B$ and $A$ is zero elsewhere.
%=\operatorname{diag}(0,\dots,0,B,0,\dots,0)$ with $B$ placed in the $(i,j)$-principal position.  
Then $A\in\overline{K^*}$, and by Equation (\ref{conjugation}), $AM\in\overline{K^*}$.  
The corresponding $2\times2$ principal block of $AM$ equals $BM{[i,j]}$.  
Since $B$ is invertible, $M{[i,j]}$ must also be invertible (from the rank preserver argument). 
Thus every $2\times2$ principal block of $M$ is invertible. This fact will be crucial in establishing the conclusion of Step 4. In fact, one can prove more generally that every principal submatrix of $M$ is invertible.

\medskip
\noindent
\textbf{Step 3. Positivity of eigenvalues.}
 By setting $A = I$, it follows that $M = I \cdot M \in K^*$, so $M$ admits a real logarithm. In particular, any negative eigenvalue of $M$ must appear with even multiplicity in its corresponding Jordan blocks.
 
\hspace{1cm}\textmd{{\it Step 3.1}: $M$ has no non-real eigenvalues:}
Assume, for the sake of a contradiction, that $M$ has a non-real eigenvalue. Since $M$ is real, we have  $\operatorname{spec}(M) = \{\lambda,\overline{\lambda}\}$, where $\lambda  \notin\mathbb{R}$,
Using the real-Jordan form of a square matrix, it follows that  there is a real $2$-dimensional $M$-invariant subspace
$V\subset\mathbb{R}^n$ and a real basis of $V$, in which the restriction of $M$ is represented by
a real $2\times 2$ matrix of the form
\[
M_0 = r R_\theta
= r\begin{pmatrix}
\cos\theta & -\sin\theta\\
\sin\theta & \cos\theta
\end{pmatrix},~{\rm with} ~ r>0,\ 0<|\theta|<\pi.
\]
Extend this basis of $V$ to a basis of $\mathbb{R}^n$. In the resulting basis, $M$ has the
block upper-triangular form
\[
S^{-1} M S =
\begin{pmatrix}
M_0 & *\\
0 & M_1
\end{pmatrix}
\]
for some $S\in \mathrm{GL}_n(\mathbb{R})$ and some $(n-2)\times(n-2)$ real matrix $M_1$. Observe that $M_1\in K^*$.

%Since $\overline{K^*}$ is similarity-invariant and Equation \ref{conjugation} is preserved under
%conjugation (replacing $A$ by $AS^{-1}$ and $M$ by $S^{-1}MS$), 
We may assume, without loss of
generality, that $M$ is of the above block form, since the spectrum is preserved by similarity transformations.

For the $2\times2$ block $M_0 = rR_\theta$, consider the matrix $A_\theta$:
\[
A_\theta =
\begin{pmatrix}
1 & b_\theta\\
0 & 1
\end{pmatrix},
\]
with $b_\theta\in\mathbb{R}$ chosen such that
$2\cos\theta + b_\theta\sin\theta < -2.$ 
Then $\operatorname{spec}(A_\theta)=\{1,1\}\subset (0,\infty)$, so $A_\theta\in K_2^*$.

Define
\[
B_\theta := A_\theta R_\theta
=
\begin{pmatrix}
1 & b_\theta\\
0 & 1
\end{pmatrix}
\begin{pmatrix}
\cos\theta & -\sin\theta\\
\sin\theta & \cos\theta
\end{pmatrix}
=
\begin{pmatrix}
\cos\theta + b_\theta\sin\theta & -\sin\theta + b_\theta\cos\theta\\
\sin\theta & \cos\theta
\end{pmatrix}.
\]
It easily follows that
\[
\det(B_\theta)
= (\cos\theta + b_\theta\sin\theta)\cos\theta
-(-\sin\theta + b_\theta\cos\theta)\sin\theta
= \cos^2\theta + \sin^2\theta = 1,
\]
\[
\operatorname{tr}(B_\theta)
= (\cos\theta + b_\theta\sin\theta) + \cos\theta
= 2\cos\theta + b_\theta\sin\theta.
\]
Thus, the eigenvalues $\lambda_1,\lambda_2$ of $B_\theta$ satisfy
\[
\lambda_1 + \lambda_2 = \operatorname{tr}(B_\theta), \qquad
\lambda_1\lambda_2 = \det(B_\theta) = 1,
\]
and they are the roots of
\[
\lambda^2 - \operatorname{tr}(B_\theta)\lambda + 1 = 0.
\]
The discriminant is
\[
\Delta = (\operatorname{tr}(B_\theta))^2 - 4\det(B_\theta)
= (2\cos\theta + b_\theta\sin\theta)^2 - 4.
\]
By our choice $2\cos\theta + b_\theta\sin\theta < -2$, we have
$|\operatorname{tr}(B_\theta)|>2$, hence $\Delta>0$. Therefore, $\lambda_1,\lambda_2$ are
real and distinct. Moreover,
\[
\lambda_1\lambda_2 = 1>0, \qquad \lambda_1 + \lambda_2 = \operatorname{tr}(B_\theta) < 0,
\]
so $\lambda_1,\lambda_2$ must have the same sign and a negative sum, which implies $ 
\lambda_1,\lambda_2 \in (-\infty,0). $ 
Thus $B_\theta$ has two distinct negative real eigenvalues.

Now, since $M_0 = rR_\theta$ with $r>0,$ we have 
\[
A_\theta M_0 = A_\theta (rR_\theta) = r B_\theta.
\]
The positive scalar $r$ does not change the sign pattern of eigenvalues, so $A_\theta M_0$
also has two distinct negative real eigenvalues. Therefore $A_\theta M_0\notin K_2^*$.

Embed $A_\theta$ into $\mnr$ by defining
\[
\widehat{A} := A_\theta \oplus I_{n-2}
\]
in the chosen basis. Then all eigenvalues of $\widehat{A}$ are equal to $1$, so
$\widehat{A}\in K^* \subset \overline{K^*}$. However,
\[
\widehat{A} M
=
(A_\theta \oplus I_{n-2})
\begin{pmatrix}
M_0 & *\\
0 & M_1
\end{pmatrix}
=
\begin{pmatrix}
A_\theta M_0 & *\\
0 & M_1
\end{pmatrix}.
\]
The leading $2\times2$ block $A_\theta M_0$ has two distinct negative eigenvalues, so
$\widehat{A}M\notin K^*$ since $M_1 \in K^*$. In particular, $\widehat{A}M\notin \overline{K^*}$.

Thus, we have found $\widehat{A}\in \overline{K^*}$ with $\widehat{A}M\notin\overline{K^*}$, contradicting
Equation (\ref{conjugation}). Therefore, $M$ cannot have non-real eigenvalues, that is $\operatorname{spec}(M)\subset\mathbb{R}.$

\hspace{1cm}\textmd{{\it Step 3.2}: Exclude negative real eigenvalues:} 
Assume $M$ has a negative real eigenvalue $\mu < 0$. Let $V_\mu$ be its generalized eigenspace. Over $\mathbb{R}$, $M|_{V_\mu}$ can be written as a direct sum of Jordan blocks:
\[
M|_{V_\mu} = J_{r_1}(\mu) \oplus J_{r_2}(\mu) \oplus \dots \oplus J_{r_k}(\mu),
\]
(up to similarity).

Since $M \in K^*$, the number of Jordan blocks of each size is even. Construct the  block-diagonal matrix $A$ as follows:
\begin{itemize}
    \item For each generalized eigenspace $V_\lambda$ for eigenvalues $\lambda \neq \mu$, set $A|_{V_\lambda} = I$.
    \item On $V_\mu$, set $A|_{J_{r_m}(\mu)} = c_m I_{r_m}$, with $c_m > 0$ and distinct within each pair.
\end{itemize}

Then $A \in K^*$ because all eigenvalues are positive. Compute $AM$ restricted to $V_\mu$: each Jordan block has eigenvalue $c_m \mu < 0$. Since $c_m$ are distinct within each pair, some negative eigenvalues appear with odd multiplicity (multiplicity 1 per block). Using the real logarithm criterion, $AM \notin K^*$, which is a contradiction.

Thus, all eigenvalues of $M$ are positive real numbers, namely $
\operatorname{spec}(M) \subset (0, \infty).$
 
\medskip
\noindent
\textbf{Step 4. The structure of each $2\times2$ principal submatrix.}
For $i \neq j$ fixed, write
\[
M{[\{i,j\}]}=\begin{pmatrix} a & b\\ c & d\end{pmatrix},~ {\rm with} ~ a,b,c,d\in\mathbb R.
\]

For $\theta\in(0,\pi)$ define the rotation
\[
R(\theta)=
\begin{pmatrix}\cos\theta & -\sin\theta\\[4pt]\sin\theta & \cos\theta\end{pmatrix}.
\]
Each $R(\theta)$ has eigenvalues $e^{\pm i\theta}$, hence $R(\theta)\in\overline{K^*_2}$.  
Using Equation (\ref{conjugation}), $R(\theta)M{[\{i,j\}]}\in\overline{K^*_2}$ for all $\theta$. As before, let $A$ be an $n \times n$ matrix with $A{[\{i,j\}]} =R(\theta)$ and all other entries of $A$ equal zero. Then $AM\in \overline{K^*}.$
The corresponding $2\times2$ principal block of $AM$ equals $R(\theta)M{[\{i,j\}]}.$ For this particular choice of $A$ $$AM\in \overline{K^*}\iff R(\theta)M{[\{i,j\}]}\in\overline{K^*_2}.$$ Compute
\[
C(\theta):=R(\theta)M{[\{i,j\}]}
=\begin{pmatrix}
a\cos\theta - c\sin\theta & b\cos\theta - d\sin\theta\\[4pt]
a\sin\theta + c\cos\theta & b\sin\theta + d\cos\theta
\end{pmatrix}.
\]
Then
\[
\operatorname{tr}C(\theta)=(a+d)\cos\theta+(b-c)\sin\theta,
\qquad
\det C(\theta)=ad-bc>0,
\]
and the discriminant
\[
\Delta(\theta)
=\big(\operatorname{tr}C(\theta)\big)^2-4\det C(\theta).
\]

Define
\[
R_0:=\sqrt{(a+d)^2+(b-c)^2}, \qquad 
\cos\phi=\frac{a+d}{R_0},\quad \sin\phi=\frac{b-c}{R_0}.
\]
Then
\[
\operatorname{tr}C(\theta)=R_0\cos(\theta-\phi),
\quad
\Delta(\theta)=R_0^2\cos^2(\theta-\phi)-4(ad-bc).
\]
Hence
\[
\max_{\theta}\Delta(\theta)
=R_0^2-4(ad-bc)=(a-d)^2+(b+c)^2.
\]

If $(a-d)^2+(b+c)^2>0$, choose $\theta$ close to $\pi$ so that $\operatorname{tr}C(\theta)<0$ and $\Delta(\theta)>0$.  
Then $C(\theta)$ has positive determinant, negative trace, and distinct real eigenvalues, both negative. This contradicts $C(\theta)\in\overline{K^*_2}$.  
Therefore
\[
(a-d)^2+(b+c)^2=0 \quad\Rightarrow\quad a=d,\ b=-c.
\]
Thus
\[
M_{[i,j]}=aI_2+bJ,\quad J=\begin{pmatrix}0&1\\-1&0\end{pmatrix}.
\]

\medskip
\noindent
\textbf{Step 5. Positivity of eigenvalues implies that $b=0$.}
The eigenvalues of $M{[\{i,j\}]}=aI_2+bJ$ are $a\pm ib$.  
Since all eigenvalues of $M$ are real and positive, we must have $b=0$,  
so $M{[\{i,j\}]}=aI_2$ with $a>0$.  
As this holds for every pair $(i,j)$, all diagonal entries coincide and
\[
M=cI_n, \quad c>0.
\]

\medskip
\noindent
\textbf{Step 6. Characterization of $\varphi$.}
If $\varphi(A)=PAQ$, then $Q P=M=cI_n$, so $Q=cP^{-1}$ and
\[
\varphi(A)=c\,PAP^{-1}.
\]
If $\varphi(A)=PA^TQ$, the same argument yields
\[
\varphi(A)=c\,PA^TP^{-1}.
\]

\medskip
\noindent
($\Leftarrow$) The converse is immediate: both forms clearly map $K^*$ onto itself, since similarity, transpose, and positive scaling preserve the spectrum and thus the set $K^*$.
\end{proof}

\begin{thm}\label{thm:K-preserversstandar}
Let $\varphi: \mnr \to \mnr$ be a bijective linear map of the form 
$\varphi(A) = P A Q$ or $\varphi(A) = P A^{T} Q$. Then
\[
\varphi({K}) = {K}
\]
if and only if there exist $P \in \mathrm{GL}_n(\mathbb{R})$ and $c > 0$ such that
\[
\varphi(A) = c P A P^{-1} 
\quad \text{or} \quad 
\varphi(A) = c P A^{T} P^{-1}, 
\quad \forall\, A \in \mnr.
\]
\end{thm}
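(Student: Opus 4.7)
The plan is to reduce this theorem directly to Theorem~\ref{thm:K^*-preservers} by exploiting the equality $\overline{K}=\overline{K^*}$ established in Section~\ref{mainresults} together with Lemma~\ref{closuremap}. The observation is that the proof of Theorem~\ref{thm:K^*-preservers} already passes through closures almost immediately (via Step~1) and from then on uses only the closure-level equivalence plus the single fact $M\in K^*$. Both of these survive the weaker hypothesis $\varphi(K)=K$.

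For the $(\Rightarrow)$ direction, I would assume $\varphi(K)=K$ with $\varphi$ of standard form $\varphi(A)=PAQ$ (the transposed case being analogous). Since $\varphi$ is a bijective linear map on the finite-dimensional space $\mnr$, Lemma~\ref{closuremap} immediately yields $\varphi(\overline{K})=\overline{K}$, and the identity $\overline{K}=\overline{K^*}$ upgrades this to $\varphi(\overline{K^*})=\overline{K^*}$. Setting $M:=QP$ and conjugating by $P$, I would recover exactly the closure-level equivalence
\[
A\in\overline{K^*}\ \Longleftrightarrow\ AM\in\overline{K^*}
\]
appearing as Equation~(\ref{conjugation}) in the proof of Theorem~\ref{thm:K^*-preservers}.

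Next I would verify that the auxiliary fact $M\in K^*$ used in Step~3 of the earlier proof still holds: since $I\in K$, we have $\varphi(I)=PQ\in K$, and as $K$ is invariant under similarity, $M=P^{-1}(PQ)P\in K\subset K^*$. With both the closure equivalence and $M\in K^*$ in hand, Steps~2--6 of the proof of Theorem~\ref{thm:K^*-preservers} carry over verbatim, because every eigenvalue obstruction used there is phrased purely in terms of membership in $\overline{K^*_2}$ or $\overline{K^*}$. The conclusion is $M=cI_n$ with $c>0$, hence $Q=cP^{-1}$, giving $\varphi(A)=cPAP^{-1}$ (or $\varphi(A)=cPA^TP^{-1}$ in the transposed case). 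The $(\Leftarrow)$ direction is immediate, since similarity, transposition, and positive scaling all preserve the spectrum and therefore the defining property of $K$, namely the absence of negative real eigenvalues.

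The main obstacle, which is really a verification rather than a genuine difficulty, is confirming that the proof of Theorem~\ref{thm:K^*-preservers} depends only on the closure-level equivalence together with $M\in K^*$, rather than on some subtler use of the stronger hypothesis $\varphi(K^*)=K^*$. A careful reading shows this is indeed the case, so beyond the identity $\overline{K}=\overline{K^*}$ and Lemma~\ref{closuremap}, no new machinery is required.
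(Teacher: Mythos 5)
Your reduction is correct and matches the paper's intent: the paper's own proof of Theorem~\ref{thm:K-preserversstandar} is simply a one-line reference to Theorem~\ref{thm:K^*-preservers}, so your careful accounting of exactly which ingredients transfer (the closure-level equivalence via $\overline{K}=\overline{K^*}$ and Lemma~\ref{closuremap}, together with $M\in K\subset K^*$ coming from $\varphi(I)=PQ\in K$ and similarity-invariance of $K$) is a welcome sharpening of what the paper leaves implicit.

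One small imprecision worth flagging: your claim that ``every eigenvalue obstruction used there is phrased purely in terms of membership in $\overline{K^*_2}$ or $\overline{K^*}$'' is not literally true of Step~3.2 of the proof of Theorem~\ref{thm:K^*-preservers}, whose contradiction is stated at the $K^*$-level (``$AM\notin K^*$''), not the closure level. For Jordan blocks of size $r>1$ the resulting $AM$ would still lie in $\overline{K^*}$ (nilpotent off-diagonal entries can be perturbed to split a repeated negative eigenvalue into a conjugate pair), so the closure equivalence alone would not reproduce that step. You are saved here because your stronger observation $M\in K$ --- not merely $M\in K^*$ --- shows $M$ has \emph{no} negative eigenvalues at all, rendering Step~3.2 vacuous. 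It would be cleaner to say this explicitly rather than nominally downgrading to $M\in K^*$ and invoking ``verbatim'' transfer of all steps. With that clarification the argument is complete.
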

\begin{proof}
    The proof follows the same steps as in Theorem~\ref{thm:K^*-preservers}
\end{proof}

\subsection{Zariski Denseness and Preservation of \(\mathrm{GL}_n(\mathbb{R})\)}

Theorem~\ref{thm:K-preserversstandar} characterizes all bijective linear maps $\varphi$ satisfying $\varphi(K)=K$ when $\varphi$ is assumed to have a standard form. Our aim now is to classify all bijective linear maps
\[
\varphi : \mnr \to \mnr
\]
satisfying $\varphi(K)=K$ {without} imposing any structural assumptions on $\varphi$.

We claim that every such map necessarily preserves invertibility:
\[
\varphi(\mathrm{GL}_n(\mathbb{R})) = \mathrm{GL}_n(\mathbb{R}),
\]
which would permit a complete classification of $\varphi$ using the existing theory of invertibility-preserving linear transformations.

A natural path towards proving this claim is as follows.  
First we establish that $K$ is {dense} in $\mathrm{GL}_n(\mathbb{R})$ with respect to a suitable topology on $\mnr$, and then it will follow that  $\varphi$ is continuous with respect to this topology. From there the condition that  $\varphi(K)=K$, together with Lemma~\ref{closuremap}, would ensure that the map $\varphi$ must leave the group $\mathrm{GL}_n(\mathbb{R})$ invariant.

To this end, we consider the Zariski topology on the set $\mnr$, which represents a topology particularly well-suited for the study of polynomial equations in algebraic geometry. {For a detailed treatment of the Zariski topology on an affine space, see \cite{reid1988}, Section 3.5.}

With respect to the Zariski topology on $\mnr$, we will establish two key claims:
\begin{enumerate}
    \item[(i)] the set $K$ is dense in $\mathrm{GL}_n(\mathbb{R})$ (with respect to the subspace topology
), and
    \item[(ii)] every linear map $\varphi : \mnr \to \mnr$ is continuous.
\end{enumerate}

\subsection*{Zariski Topology}

A subset $V \subset \mnr$ is called {Zariski closed} if there exist $k$ polynomials
\[
f_1,\dots,f_k \in \mathbb{R}[x_{11},\dots,x_{nn}]
\]
such that
\[
V = \{A \in \mnr : f_1(A)=\cdots=f_k(A)=0\}.
\]
The complement of a Zariski-closed set is defined to be {Zariski open}.
For any subset $S \subseteq \mnr$, its {Zariski closure} is
\[
\overline{S}^{\,Z} := \bigcap_{\substack{V\text{ Zariski closed}\\ S\subseteq V}} V.\] 
A set $S$ is {Zariski dense} in $\mathrm{GL}_n(\mathbb{R})$ if $\overline{S}^{\,Z} = \mathrm{GL}_n(\mathbb{R}).$ 

\subsection*{Euclidean-Open Sets Are Zariski Dense}

A fundamental fact needed here is:

\begin{quote}
A nonzero real polynomial cannot vanish on a nonempty Euclidean-open subset of 
$\mathbb{R}^m$.
\end{quote}

Indeed, if a polynomial vanishes on an open ball, then by real-analytic continuation
it must be the zero polynomial.

Let $V \subsetneq \mathrm{GL}_n(\mathbb{R})$ be a proper Zariski-closed set.  
Then $V = Z(p_1,\dots,p_k)$ for some polynomials $p_i$, at least one of which is nonzero.
Such a polynomial cannot vanish on any nonempty Euclidean-open set.\\

To prove that $K$ is open in $\mnr$, we begin by recalling the following fundamental fact: the eigenvalue map is continuous in the standard Euclidean  topology. That is, small perturbations of a matrix lead to small changes in its eigenvalues.

\subsection*{Continuity of the Eigenvalue Map on {$\mnr$}}

Let $\mnc$ denote the space of all complex $n\times n$ matrices,
equipped with any fixed matrix norm $\|\cdot\|$.
We write $\mathbb{C}^n_{\!\sim}$ for the space of unordered $n$--tuples of
complex numbers, endowed with the quotient topology induced from
$\mathbb{C}^n$ (equivalently, with the usual matching metric; see
\cite[p.~153]{Bhatia1997}).
For $A\in \mnc$, let $\sigma(A)\in\mathbb{C}^n_{\!\sim}$ denote
the multiset of eigenvalues of $A$, counted with algebraic multiplicity.

It is a classical fact (see, e.g., \cite[pp.~138--140]{Artin2011},
\cite[p.~121]{HornJohnson2013}, \cite{Bhatia1997}) that the eigenvalue map
\[
    \sigma : \mnc \longrightarrow \mathbb{C}^n_{\!\sim}
\]
is continuous. More precisely, for every $A\in \mnc$ and every
$\varepsilon>0$, there exists $\delta>0$ such that
\[
    \|A-\widetilde A\| < \delta
    \qquad\Longrightarrow\qquad
    d\bigl(\sigma(A),\sigma(\widetilde A)\bigr) < \varepsilon,
\]
where $d$ denotes the matching distance on $\mathbb{C}^n_{\!\sim}$.

We now extend this continuity result to real matrices.
Identifying $\mnr$ as a real linear subspace of
$\mnc$, consider the inclusion map
\[
    \iota : \mnr \hookrightarrow \mnc, \qquad
    \iota(A) = A.
\]
This map is continuous, and $\mnr$ is a closed subset of
$\mnc$.
Since $\sigma : \mnc \to \mathbb{C}^n_{\!\sim}$ is continuous,
its restriction to $\mnr$ remains continuous.

\begin{pro}
The eigenvalue map
\[
    \sigma : \mnr \longrightarrow \mathbb{C}^n_{\!\sim}
\]
is continuous.
\end{pro}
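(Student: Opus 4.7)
The plan is to deduce this proposition directly from the classical continuity of the eigenvalue map on $\mnc$, using the fact that $\mnr$ sits inside $\mnc$ as a closed real-linear subspace. No new analytic machinery is required; the argument is purely structural and essentially reads off from the setup given just before the statement.

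First, I would invoke the standard result, already cited in the excerpt from \cite{Bhatia1997}, \cite{HornJohnson2013}, and \cite{Artin2011}, that the eigenvalue map
\[
\sigma : \mnc \longrightarrow \mathbb{C}^n_{\!\sim}
\]
is continuous, where $\mathbb{C}^n_{\!\sim}$ carries the matching metric (equivalently, the quotient topology from the symmetric-group action on $\mathbb{C}^n$). Second, I would observe that the inclusion
\[
\iota : \mnr \hookrightarrow \mnc, \qquad \iota(A) = A,
\]
is an isometric embedding with respect to any fixed matrix norm, hence continuous, and that $\mnr$ is closed in $\mnc$. Third, I would write
\[
\sigma\bigl|_{\mnr} \;=\; \sigma \circ \iota,
\]
a composition of continuous maps, and conclude continuity of the restriction. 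In quantitative form, given $A\in\mnr$ and $\varepsilon>0$, one may take the very same $\delta>0$ provided by the continuity of $\sigma$ on $\mnc$ at the point $\iota(A)$: any $\widetilde{A}\in\mnr$ with $\|A-\widetilde{A}\|<\delta$ then satisfies $d\bigl(\sigma(A),\sigma(\widetilde{A})\bigr)<\varepsilon$ by restriction.

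I do not anticipate any genuine obstacle, since the continuity of $\sigma$ on $\mnc$ is a classical theorem and the restriction-to-a-subspace step is immediate. The only small points worth flagging are that the matching metric on unordered $n$-tuples is well-defined and induces the quotient topology (a routine check), and that continuity of $\sigma$ on $\mnc$ is not a trivial statement but was already absorbed into the excerpt. An alternative self-contained route, should one wish to avoid complexification, would be to note that the coefficients of the characteristic polynomial $\det(\lambda I - A)$ are polynomials in the entries of $A$, and then apply the continuity of the roots of a monic polynomial viewed as a multiset-valued function of the coefficients; this yields the same conclusion but demands the additional root-continuity lemma, so the composition-with-$\iota$ argument is preferable here.
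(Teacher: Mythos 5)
Your argument is correct and is essentially identical to the paper's proof: both express $\sigma\bigl|_{\mnr}$ as the composition $\sigma \circ \iota$ of the continuous inclusion $\iota : \mnr \hookrightarrow \mnc$ with the continuous eigenvalue map on $\mnc$, and conclude by continuity of composition. Your additional quantitative remark and the noted alternative via root-continuity of the characteristic polynomial are harmless extras that do not change the route.
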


\begin{proof}
The map $\sigma|_{\mnr}$ is the composition
\[
    \mnr \xrightarrow{\;\iota\;}
    \mnc \xrightarrow{\;\sigma\;}
    \mathbb{C}^n_{\!\sim}.
\]
As both $\iota$ and $\sigma$ are continuous, so is their
composition.  Therefore, the eigenvalue map is continuous on
$\mnr$.
\end{proof}

We are now ready to prove that \(K\) is an open subset of \(\mnr\). Note that
\[
K
=
\{ A \in \mnr : \sigma(A) \cap (-\infty, 0] = \varnothing \}.
\]

\begin{pro}
The set $K$ is open in $\mnr$ with respect to any matrix norm.
\end{pro}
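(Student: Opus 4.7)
The plan is to deduce openness of $K$ directly from continuity of the eigenvalue map $\sigma:\mnr\to\mathbb C^{n}_{\!\sim}$, already established in the preceding proposition. Since $\mnr$ is finite dimensional, all matrix norms are equivalent, so it suffices to prove openness with respect to one fixed norm; any conclusion about open sets then transfers to every other norm.

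I would proceed pointwise. Fix $A\in K$ and let $\lambda_1,\dots,\lambda_n\in\mathbb C$ be its eigenvalues, counted with algebraic multiplicity. By definition of $K$, none of the $\lambda_i$ lies in the closed set $(-\infty,0]\subset\mathbb C$, so the quantity
\[
d_0 := \min_{1\le i\le n}\operatorname{dist}\bigl(\lambda_i,(-\infty,0]\bigr)
\]
is strictly positive. Apply continuity of $\sigma$ at $A$ with the choice $\varepsilon := d_0/2$: there exists $\delta>0$ such that whenever $\|A-\widetilde A\|<\delta$, the matching distance satisfies $d(\sigma(A),\sigma(\widetilde A))<\varepsilon$. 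Unpacking the definition of the matching distance, this yields a permutation $\pi$ of $\{1,\dots,n\}$ and eigenvalues $\widetilde\lambda_1,\dots,\widetilde\lambda_n$ of $\widetilde A$ with $|\lambda_i-\widetilde\lambda_{\pi(i)}|<d_0/2$ for every $i$.

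From here the conclusion is immediate: for each $i$, any point within distance $d_0/2$ of $\lambda_i$ is at distance at least $d_0-d_0/2=d_0/2>0$ from $(-\infty,0]$, hence $\widetilde\lambda_{\pi(i)}\notin(-\infty,0]$. Since $\pi$ is a bijection, this exhausts all eigenvalues of $\widetilde A$, so $\sigma(\widetilde A)\cap(-\infty,0]=\varnothing$ and therefore $\widetilde A\in K$. Thus the norm ball of radius $\delta$ around $A$ lies inside $K$, proving $K$ is open. There is no serious obstacle; the only point requiring mild care is interpreting the matching distance correctly so that the inequality $|\lambda_i-\widetilde\lambda_{\pi(i)}|<d_0/2$ applies uniformly across all eigenvalues, which is precisely what the matching metric on $\mathbb C^{n}_{\!\sim}$ delivers.
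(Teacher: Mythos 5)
Your proof is correct and follows essentially the same argument as the paper: fix $A\in K$, take $\varepsilon$ to be half the minimum distance from $\sigma(A)$ to $(-\infty,0]$, and use continuity of $\sigma$ to conclude that nearby matrices have spectra that still avoid $(-\infty,0]$. The only cosmetic addition is your remark about norm equivalence, which the paper leaves implicit.
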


\begin{proof}
Take any matrix $A \in K$. Since $\sigma(A)$ is finite and disjoint from the closed set $(-\infty,0]$, the positive quantity
\[
\delta_0 := \min_{\lambda \in \sigma(A)} \mathrm{dist}(\lambda, (-\infty,0]) > 0
\]
is well-defined. Set $\varepsilon := \frac{\delta_0}{2} > 0$.

By continuity of the eigenvalue map $\sigma: \mnr \to \mathbb{C}^n_{\sim}$, there exists $\eta > 0$ such that
\[
\|A - \widetilde{A}\| < \eta \quad \Longrightarrow \quad d(\sigma(A), \sigma(\widetilde{A})) < \varepsilon.
\]
%where $d$ denotes the matching metric on unordered $n$-tuples.

Hence, for every eigenvalue $\mu$ of $\widetilde{A}$, there exists $\lambda \in \sigma(A)$ such that
\[
|\mu - \lambda| < \varepsilon.
\]
Therefore,
\[
\mathrm{dist}(\mu, (-\infty, 0]) \geq \mathrm{dist}(\lambda, (-\infty, 0]) - |\mu - \lambda| > \delta_0 - \varepsilon = \varepsilon > 0.
\]

It follows that $\sigma(\widetilde{A}) \cap (-\infty, 0] = \varnothing$, i.e., $\widetilde{A} \in K$. This shows that the open ball $\{ \widetilde{A} : \|A - \widetilde{A}\| < \eta \}$ is contained in $K$.
Since $A \in K$ was arbitrary, $K$ is open.
\end{proof}

\subsection*{Zariski Density of $K$}

Assume, for the sake of a contradiction, that there exists a proper Zariski-closed set 
$V \subsetneq \mathrm{GL}_n(\mathbb{R})$ such that $K \subseteq V$.
Then some nonzero polynomial must vanish on $K$.
Since $K$ is Euclidean-open, this polynomial would must vanish on an open set, and
hence everywhere, which is impossible.

Therefore no proper Zariski-closed nonempty subset of $\mathrm{GL}_n(\mathbb{R})$ contains $K$.
Consequently,
\[
\overline{K}^{\,Z} = \mathrm{GL}_n(\mathbb{R}),
\]
which means
$K$ is Zariski dense in  $\mathrm{GL}_n(\mathbb{R}).$

% \begin{lem}
% Let $\phi : M_n(\mathbb{R}) \to M_n(\mathbb{R})$ be a linear map. Then
% $\phi$ is continuous with respect to the Zariski topology. In particular,
% if $\phi$ is bijective, it is a Zariski homeomorphism.
% \end{lem}

% \begin{proof}
% Identify $M_n(\mathbb{R}) \cong \mathbb{R}^{n^2}$ with coordinates
% $x_{ij}$. A Zariski-closed set in $M_n(\mathbb{R})$ has the form
% \[
% V = Z(f_1,\dots,f_k)
% := \{ A \in M_n(\mathbb{R}) : f_1(A)=\cdots=f_k(A)=0\},
% \]
% for some polynomials $f_\ell \in \mathbb{R}[x_{11},\dots,x_{nn}]$.

% Each entry of $\phi(A)$ is a linear form in the entries of $A$, so for
% every polynomial $f_\ell$, the composition
% \[
% f_\ell \circ \phi : M_n(\mathbb{R}) \to \mathbb{R}
% \]
% is again a polynomial function in the coordinates $x_{ij}$. Hence
% \[
% \phi^{-1}(V)
% = \{ A : f_\ell(\phi(A)) = 0 \ \forall \ell \}
% = Z(f_1\circ\phi,\dots,f_k\circ\phi)
% \]
% is a common zero set of polynomials, and therefore Zariski-closed.

% Thus the preimage of any Zariski-closed set under $\phi$ is Zariski-closed,
% so $\phi$ is Zariski-continuous. If $\phi$ is bijective, the same argument
% applied to $\phi^{-1}$ shows that $\phi^{-1}$ is also Zariski-continuous,
% hence $\phi$ is a homeomorphism for the Zariski topology.
% \end{proof}

\begin{lem}
Let $\phi : \mnr \to \mnr$ be a linear map. Then
$\phi$ is continuous with respect to the Zariski topology. In particular,
if $\phi$ is bijective, it is a Zariski homeomorphism.
\end{lem}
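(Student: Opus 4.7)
My plan is to establish Zariski continuity by showing that the preimage of any Zariski-closed set under a linear map is again Zariski-closed, and then to invoke the fact that a linear bijection has a linear inverse.

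First I would fix a Zariski-closed set $V\subseteq \mnr$, so that by definition there exist polynomials $f_1,\dots,f_k\in\mathbb{R}[x_{11},\dots,x_{nn}]$ with
\[
V = \{B\in \mnr : f_1(B)=\cdots=f_k(B)=0\}.
\]
The preimage is then
\[
\phi^{-1}(V) = \{A\in \mnr : (f_i\circ\phi)(A)=0,\ i=1,\dots,k\}.
\]
So the continuity statement reduces entirely to showing that each $f_i\circ\phi$ is again a polynomial in the entries of $A$.

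The key observation is that since $\phi$ is linear, every entry of the matrix $\phi(A)$ is a homogeneous linear form in the entries $a_{11},\dots,a_{nn}$; explicitly, writing $\phi(A)_{rs}=\sum_{p,q}\alpha^{rs}_{pq}a_{pq}$ for fixed real coefficients $\alpha^{rs}_{pq}$, the substitution of these linear forms into $f_i$ produces a polynomial $f_i\circ\phi\in\mathbb{R}[x_{11},\dots,x_{nn}]$ of degree at most $\deg f_i$. Hence $\phi^{-1}(V)$ is the common zero locus of the polynomials $f_1\circ\phi,\dots,f_k\circ\phi$, which is Zariski-closed by definition. This shows that $\phi$ is continuous with respect to the Zariski topology.

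For the bijective case I would note that since $\mnr$ is finite-dimensional, the inverse of a linear bijection is linear. Applying the same argument to $\phi^{-1}$ shows that $\phi^{-1}$ is also Zariski continuous, and therefore $\phi$ is a Zariski homeomorphism. No step here presents a real obstacle; the entire content is the elementary fact that composition of polynomials with linear substitutions yields polynomials, which is exactly the algebraic counterpart of the Zariski topology being adapted to polynomial maps.
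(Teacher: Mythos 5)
Your proof is correct and takes essentially the same route as the paper: you show that composing the defining polynomials of a Zariski-closed set with the linear substitutions given by $\phi$ again yields polynomials, so preimages of Zariski-closed sets are Zariski-closed, and then apply the same argument to $\phi^{-1}$ in the bijective case. No meaningful difference from the paper's argument.
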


\begin{proof}
Identify $\mnr \cong \mathbb{R}^{n^2}$ via the coordinates
$(a_{ij})_{1 \le i,j \le n}$. Since $\phi$ is linear, each entry of
$\phi(A)$ is a linear combination of the entries of $A$: for each
$1 \le k,l \le n$ there exist constants $c_{ij}^{kl} \in \mathbb{R}$ such that
\[
\phi(A)_{kl} = \sum_{i=1}^n \sum_{j=1}^n c_{ij}^{kl} \, a_{ij}.
\]
Thus each coordinate function $A \mapsto \phi(A)_{kl}$ is a polynomial
function of degree $1$ in the variables $\{a_{ij}\}$. Hence $\phi$ is a
{polynomial map} $\mnr \to \mnr$.

Now let $V \subset \mnr$ be a Zariski-closed set. Then
$V = Z(f_1,\dots,f_r)$ for some polynomials
$f_1,\dots,f_r \in \mathbb{R}[x_{11},\dots,x_{nn}]$, that is,
\[
V = \{ B \in \mnr : f_1(B) = \cdots = f_r(B) = 0 \}.
\]
Consider the preimage of $V$ under $\phi$:
\[
\phi^{-1}(V)
= \{ A \in \mnr : f_i(\phi(A)) = 0 \text{ for all } i \}.
\]
Since each $\phi(A)_{kl}$ is a polynomial in the entries $a_{ij}$, and
each $f_i$ is a polynomial in the coordinate functions on the target,
the composition $f_i \circ \phi$ is again a polynomial in the variables
$\{a_{ij}\}$. Therefore,
$\phi^{-1}(V) = Z(f_1 \circ \phi,\dots,f_r \circ \phi) $ 
is a common zero set of polynomials, hence Zariski-closed.

Thus the preimage of any Zariski-closed set under $\phi$ is Zariski-closed,
so $\phi$ is continuous in the Zariski topology. If $\phi$ is bijective,
the same argument applies to $\phi^{-1}$, which is also linear (hence
polynomial), so $\phi^{-1}$ is Zariski-continuous and $\phi$ is a
Zariski homeomorphism.
\end{proof}

We are in a position to provide an affirmative answer to our initial claims and thus verify are main results.

\begin{lem}
   Let $\varphi : \mnr \to \mnr$ be a bijective linear map such that
\[
\varphi(K) = K.
\] Then \[
\varphi(\mathrm{GL}_n(\mathbb{R})) = \mathrm{GL}_n(\mathbb{R}).
\]
\end{lem}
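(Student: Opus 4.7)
The plan is to introduce the polynomial $q(A) := \det(\varphi(A))$ and exploit the Zariski-topological machinery developed above, so that the preservation of $K$ by $\varphi$ is upgraded to preservation of $\mathrm{GL}_n(\mathbb{R})$.

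First, since $\varphi$ is $\mathbb{R}$-linear, each entry of $\varphi(A)$ is a linear form in the entries of $A$, so $q$ is a polynomial of degree at most $n$ in the coordinates on $\mnr$. Bijectivity of $\varphi$ gives $q\!\left(\varphi^{-1}(I_n)\right)=\det(I_n)=1$, so $q$ is not identically zero. Consequently, the zero set $Z(q):=\{A\in\mnr:q(A)=0\}$ is a proper Zariski-closed subset of $\mnr$, and its complement is precisely $\varphi^{-1}(\mathrm{GL}_n(\mathbb{R}))$, a Zariski-open subset of $\mnr$.

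Next, the hypothesis $\varphi(K)=K\subseteq \mathrm{GL}_n(\mathbb{R})$ forces $q(A)\neq 0$ for every $A\in K$; hence $K\cap Z(q)=\varnothing$, and $Z(q)\cap \mathrm{GL}_n(\mathbb{R})$ is a Zariski-closed subset of $\mathrm{GL}_n(\mathbb{R})$ (in the subspace topology) that avoids the Zariski-dense set $K$. Since $\varphi$ is a Zariski homeomorphism of $\mnr$ by the previous lemma, applying the Zariski analogue of Lemma~\ref{closuremap} to the identity $\varphi(K)=K$ gives $\varphi(\overline{K}^{\,Z})=\overline{K}^{\,Z}$. Combined with Zariski density of $K$ in $\mathrm{GL}_n(\mathbb{R})$, this should pin $Z(q)\cap \mathrm{GL}_n(\mathbb{R})$ to be empty, yielding $\varphi(\mathrm{GL}_n(\mathbb{R}))\subseteq \mathrm{GL}_n(\mathbb{R})$. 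Applying the same argument to $\varphi^{-1}$, which is also a bijective linear map and satisfies $\varphi^{-1}(K)=K$, provides the reverse inclusion, and the equality $\varphi(\mathrm{GL}_n(\mathbb{R}))=\mathrm{GL}_n(\mathbb{R})$ follows.

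The hard part is the middle step, where the bridge must be made from "$Z(q)$ avoids $K$" to "$Z(q)$ avoids all of $\mathrm{GL}_n(\mathbb{R})$." The standard formulation of Zariski density only guarantees that any Zariski-closed subset of $\mathrm{GL}_n(\mathbb{R})$ that \emph{contains} $K$ must equal $\mathrm{GL}_n(\mathbb{R})$; it does not immediately exclude a non-empty Zariski-closed subset disjoint from $K$. To close this gap one must use two finer properties: the irreducibility of $\mathrm{GL}_n(\mathbb{R})$ as a Zariski-irreducible variety, and the fact that $Z(q)$ is cut out by a \emph{single} polynomial of degree at most $n$ comparable with the irreducible polynomial $\det$ of the same degree. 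These together push $q$ to be a scalar multiple of $\det$, whence $Z(q)=\{\det=0\}$ and the desired invariance of $\mathrm{GL}_n(\mathbb{R})$ under $\varphi$ is forced. This polynomial-algebraic refinement of the purely topological density statement is, I anticipate, the step that requires the most care.
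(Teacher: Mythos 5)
Your proposal takes a genuinely different route from the paper's. The paper argues abstractly that $\varphi$ is a Zariski homeomorphism, that $\overline{K^*}^{\,Z}=\mathrm{GL}_n(\mathbb{R})$, and that preservation of $K^*$ therefore forces preservation of the closure $\mathrm{GL}_n(\mathbb{R})$. You instead introduce the explicit polynomial $q=\det\circ\varphi$ and aim to show that $Z(q)\cap\mathrm{GL}_n(\mathbb{R})=\varnothing$. This is a more concrete reformulation of the same goal, and you correctly and explicitly flag the central difficulty, which the paper glosses over: Zariski density of $K$ in $\mathrm{GL}_n(\mathbb{R})$ only says that a Zariski-closed subset of $\mathrm{GL}_n(\mathbb{R})$ \emph{containing} $K$ must be everything; it says nothing about a nonempty Zariski-closed subset \emph{disjoint} from $K$. (For the same reason, the Zariski closure of $K$ taken in $\mnr$ is all of $\mnr$, not $\mathrm{GL}_n(\mathbb{R})$, so the closure-preservation step in the paper is itself delicate.)

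However, the fix you propose does not close the gap. You assert that irreducibility of $\mathrm{GL}_n(\mathbb{R})$ together with $q$ being a single polynomial of the same degree as $\det$ forces $q$ to be a scalar multiple of $\det$. That implication is false. For $n=2$, the polynomial $q(A)=a_{11}^{2}+(a_{12}-a_{21})^{2}$ is irreducible over $\mathbb{R}$, has degree $2=n$, vanishes on no element of $K_2$ (its real zero locus consists of matrices $\left(\begin{smallmatrix}0 & b\\ b & d\end{smallmatrix}\right)$, all of which have an eigenvalue in $(-\infty,0]$), yet it vanishes at the invertible matrix $\left(\begin{smallmatrix}0 & 1\\ 1 & 0\end{smallmatrix}\right)$ and is certainly not a scalar multiple of $\det$. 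So ``irreducible, degree $n$, avoids $K$'' does not pin $q$ to be proportional to $\det$; the argument must use more of the structure of $q$ (that it is $\det$ precomposed with a linear automorphism of $\mnr$, which constrains, e.g., the signature and the full real variety $Z(q)$) or more of the structure of $K$ than mere Zariski density. As it stands, the ``middle step'' you single out remains open, and the lemma is not yet proved by your route.
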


A similar result holds for $K^*$, as seen in the next result.

 \begin{lem}
   Let $\varphi : \mnr \to \mnr$ be a bijective linear map such that
\[
\varphi(K^*) = K^*.
\] Then \[
\varphi(\mathrm{GL}_n(\mathbb{R})) = \mathrm{GL}_n(\mathbb{R}),
\]
\end{lem}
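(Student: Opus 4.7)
The plan is to reduce to the preceding lemma (for $K$) by showing that $\varphi(K^*)=K^*$ already forces $\varphi(K)=K$; the previous lemma then delivers $\varphi(\mathrm{GL}_n(\mathbb{R}))=\mathrm{GL}_n(\mathbb{R})$ at once.

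The reduction rests on the identification $K=\mathrm{int}(K^*)$, with the interior taken with respect to the Euclidean topology on $\mnr$. One inclusion is immediate: $K$ is Euclidean open (by the earlier proposition) and contained in $K^*$, so $K\subseteq\mathrm{int}(K^*)$. For the converse, take $A\in K^*\setminus K$; then $A$ is invertible and has some negative real eigenvalue $\lambda_0$, and by Theorem~\ref{existreallog} the Jordan blocks of $A$ at $\lambda_0$ appear in pairs. Writing $A=SJS^{-1}$ in real Jordan form with $S$ real and invertible, I would consider the perturbation $A+\varepsilon E$ where $E:=SE_JS^{-1}$ and $E_J$ is real, diagonal, supported only on the rows and columns of the Jordan blocks at $\lambda_0$, with pairwise distinct diagonal entries. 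Each block $J_r(\lambda_0)+\varepsilon\,\mathrm{diag}(\delta_1,\dots,\delta_r)$ is upper triangular with diagonal $\lambda_0+\varepsilon\delta_i$, so for small $\varepsilon>0$ its spectrum consists of distinct negative real numbers, each sitting in a single $1\times 1$ Jordan block. This violates the criterion of Theorem~\ref{existreallog}, so $A+\varepsilon E\notin K^*$. Hence every neighborhood of $A$ contains matrices outside $K^*$, proving $A\notin\mathrm{int}(K^*)$ and establishing $\mathrm{int}(K^*)=K$.

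Since $\varphi$ is a bijective linear self-map of the finite-dimensional vector space $\mnr$, it is a Euclidean homeomorphism, and homeomorphisms commute with the interior operator. Combining this with $\varphi(K^*)=K^*$ gives
$\varphi(K)=\varphi(\mathrm{int}(K^*))=\mathrm{int}(\varphi(K^*))=\mathrm{int}(K^*)=K.$
Applying the previous lemma to $\varphi$ now yields $\varphi(\mathrm{GL}_n(\mathbb{R}))=\mathrm{GL}_n(\mathbb{R})$.

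The main obstacle is the perturbation step inside the equality $\mathrm{int}(K^*)=K$: one must ensure that the splitting eigenvalues remain on the real axis rather than bifurcating into complex conjugate pairs, which would still sit inside $K^*$. Working block-diagonally in the real Jordan basis with a real diagonal $E_J$ makes each perturbed block upper triangular, so its spectrum is read off the diagonal and is automatically real. An alternative route that bypasses this identification is to imitate the Zariski-density proof of the $K$ lemma with $K^*$ in place of $K$: because $K\subseteq K^*$ and $K$ is Euclidean open, $K^*$ is itself Zariski dense in $\mathrm{GL}_n(\mathbb{R})$, and each step of the continuity-and-density argument carries over verbatim.
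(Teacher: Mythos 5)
Your main argument is correct but takes a genuinely different route from the paper. The paper's proof is the one you sketch only as an ``alternative'' at the very end: from $K\subseteq K^*\subseteq\mathrm{GL}_n(\mathbb{R})$ and the already-established fact $\overline{K}^{\,Z}=\mathrm{GL}_n(\mathbb{R})$, monotonicity of Zariski closure gives $\overline{K^*}^{\,Z}=\mathrm{GL}_n(\mathbb{R})$ immediately, and since a linear bijection is a Zariski homeomorphism it must carry $\overline{K^*}^{\,Z}$ onto itself --- two lines, no perturbation analysis. Your primary route instead proves the stronger structural identity $K=\mathrm{int}(K^*)$ (interior in the Euclidean topology): $K$ is Euclidean-open and $K\subseteq K^*$ gives one inclusion, and for the reverse you perturb a matrix $A\in K^*\setminus K$ along a fixed real direction $E=SE_JS^{-1}$, with $E_J$ diagonal and supported on the Jordan blocks of the negative eigenvalue $\lambda_0$, so that each perturbed block stays upper triangular with distinct real diagonal entries $\lambda_0+\varepsilon\delta_i$; for small $\varepsilon>0$ these are distinct simple negative eigenvalues, violating Culver's even-multiplicity criterion and placing $A+\varepsilon E$ outside $K^*$. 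This correctly handles the pitfall you flag (eigenvalues escaping to complex conjugate pairs), since upper triangularity forces the perturbed spectrum to stay real. You then use that $\varphi$ is a Euclidean homeomorphism to get $\varphi(K)=\varphi(\mathrm{int}(K^*))=\mathrm{int}(K^*)=K$ and invoke the $K$-lemma. Both routes are sound; yours is longer but yields the independently interesting fact $K=\mathrm{int}(K^*)$, which the paper never makes explicit, while the paper's inclusion-plus-Zariski-closure argument is sharper and avoids Jordan-form surgery entirely.
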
   
\begin{proof}
Since $K \subseteq K^{*} \subseteq \mathrm{GL}_n(\mathbb{R})$ and $\overline{K}^{\,Z} = \mathrm{GL}_n(\mathbb{R})$, we have
\[
\mathrm{GL}_n(\mathbb{R}) = \overline{K}^{\,Z} \subseteq \overline{K^{*}}^{\,Z} \subseteq \mathrm{GL}_n(\mathbb{R}),
\]
hence $\overline{K^{*}}^{\,Z} = \mathrm{GL}_n(\mathbb{R})$.  
Because $\varphi$ is bijective and preserves $K^*$, $\varphi$  preserves its Zariski closure; thus
\[
\varphi(\mathrm{GL}_n(\mathbb{R})) = \mathrm{GL}_n(\mathbb{R}).
\]
\end{proof}

\section{Conclusion}\label{conclusion}
% In this paper, we have completely characterized all linear bijections on \(M_n(\mathbb{R})\) that preserve the set of matrices admitting a real logarithm. Our main theorem shows that such preservers must be of the classical form: either conjugation by an invertible matrix or conjugation composed with transposition, scaled by a positive constant. To achieve this characterization, we proceed in two main steps. First, we analyze linear maps assumed to be of the classical form, to understand their behavior on the relevant sets. Second, leveraging the Zariski denseness result, we show that any linear bijection preserving the set of matrices with real logarithms must preserve \(GL_n(\mathbb{R})\), which forces the map to be of the classical form.

In this paper, we have provided a complete characterization of all linear bijection maps on \(\mnr\) that preserve the set of matrices admitting a real logarithm. Our main theorem establishes that such preservers must be of the form: either conjugation by an invertible matrix or conjugation composed with transposition scaled by a positive constant. To achieve this characterization, we analyzed the linear maps of the standard form, to understand the behavior on certain relevant sets. Second, leveraging a Zariski denseness result, we show that any linear bijection preserving the set of matrices with real logarithms must preserve \(\mathrm{GL}_n(\mathbb{R})\), which forces the map to be of the classical form.

A key novelty of our approach is the use of Zariski density, which shows that the set of matrices with real logarithms is dense in \(\mathrm{GL}_n(\mathbb{R})\) in the Zariski topology. This algebraic-geometric viewpoint allows confirmation that any linear map preserving this set also preserves invertibility.

More generally, these results open new avenues for the application of algebraic geometric methods, such as Zariski topology, in the study of linear preserver problems. In particular, one can prove that if a subset \(U \subseteq \mathrm{GL}_n(\mathbb{F})\) (for \(\mathbb{F} = \mathbb{R}\) or \(\mathbb{C}\)) is open in the usual topology of \(\mnf\), then any linear map on \(\mnf\) that maps \(U\) into itself must also preserve the full general linear group \(\mathrm{GL}_n(\mathbb{F})\).

\section*{Acknowledgments}
\addcontentsline{toc}{section}{Acknowledgments}
S.M.\ Fallat is supported in part by an NSERC Discovery Research Grant, Application No.: RGPIN-2025-05272.
 The work of the PIMS Postdoctoral Fellow S.\ Mondal leading to this publication was supported in part by the Pacific Institute for the Mathematical Sciences.


\begin{thebibliography}{99}
\addcontentsline{toc}{section}{References}

\bibitem{Artin2011}

M.~Artin,
{\it Algebra}, 2nd ed.,
Prentice Hall, 2011.

\bibitem{BermanHershkowitzJohnson1985}
A.~Berman, D.~Hershkowitz, and C.~R.~Johnson,
Linear transformations that preserve certain positivity classes of matrices,
{\it Linear Algebra Appl.}, {\bf 68} (1985), 9--29.

\bibitem{Bhatia1997}
R.~Bhatia,
{\it Matrix Analysis},
Springer--Verlag, New York, 1997.



%\bibitem{brockett1970finite}
%R.~W. Brockett,
%\newblock Finite Dimensional Linear Systems,
%\newblock {\it SIAM Review}, \textbf{12}(4):401--439, 1970.


\bibitem{ChoiEtAl1984}
M.-D.~Choi, D.~Hadwin, E.~Nordgren, H.~Radjavi, and P.~Rosenthal,
On positive linear maps preserving invertibility,
{\it J. Funct. Anal.}, {\bf 59} (1984), 462--469.

% \bibitem{ChoudhuryYadav2025}
% P.~N.~Choudhury and S.~Yadav, { Constructing strictly sign-regular matrices of all sizes and sign patterns}, 
% {\it Bull. London Math. Soc.}, 2025. 

\bibitem{culver}
W.~J.~Culver,
On the existence and uniqueness of the real logarithm of a matrix,
{\it Proc. Amer. Math. Soc.}, {\bf 17} (1966), 1146--1151.

\bibitem{Marcus1959}
M. Marcus, { All linear operators leaving the unitary group invariant},
{\it Duke Math. J.} {\bf 26} (1959), 155--163.
\bibitem{Molnar2007}
L.~Moln\'ar, {\it Selected preserver problems on algebraic structures of linear operators and on function spaces}, 
Lecture Notes in Mathematics, Vol. 1895, Springer-Verlag, Berlin, 2007.


\bibitem{Frobenius1897}
G.~Frobenius, {\it{\"U}ber die Darstellung der endlichen Gruppen durch lineare Substitutionen}, 
Sitzungsberichte der Deutschen Akademie der Wissenschaften zu Berlin {\bf 1897}, 994--1015.

\bibitem{GutermanLiSemrl2000}
A.~Guterman, C.~K.~Li, and P.~\v{S}emrl, {Some general techniques on linear preserver problems}, 
{\it Linear Algebra Appl.},~{\bf 315} (2000), 61--81.


\bibitem{hall}
B.~C.~Hall,
{\it Lie Groups, Lie Algebras, and Representations: An Elementary Introduction},
Graduate Texts in Mathematics, vol.~{\bf 222} (2nd ed.), Springer, 2015.

\bibitem{higham2008functions}
N.~J.~Higham,
\textit{Functions of Matrices: Theory and Computation},
SIAM, Philadelphia, 2008.


\bibitem{HornJohnson2013}
R.~A.~Horn and C.~R.~Johnson,
{\it Matrix Analysis}, 2nd ed.,
Cambridge University Press, 2013.

\bibitem{JafarianSourour1986}
A.~A.~Jafarian and A.~R.~Sourour, { Spectrum-preserving linear maps}, 
{\it J. Funct. Anal.},~{\bf 66} (1986), no.~2, 255--261.




\bibitem{LiPierce2001}
C.-K.~Li and S.~Pierce,
Linear preserver problems,
{\it Amer. Math. Monthly}, {\bf 108}(7) (2001), 591--605.

\bibitem{MarcusMoyls1959}
M.~Marcus and B.~N.~Moyls,
Linear Transformations on Algebras of Matrices,
{\it Canadian J. Math.}, {\bf 11} (1959), 61--66.

\bibitem{reid1988}
M.~Reid, {\it Undergraduate Algebraic Geometry}, Cambridge University Press, 1988.

\bibitem{Sourour1996}
A.~R.~Sourour,
Invertibility preserving linear maps on $L(X)$,
{\it Trans. Amer. Math. Soc.}, {\bf 348} (1996), 13--30.




\bibitem{Shitov2021}
Y.~Shitov,
Linear mappings preserving the copositive cone,
{\it Proc. Amer. Math. Soc.}, {\bf 149} (2021), 3173--3176.

\bibitem{Shitov2023}
Y.~Shitov,
Linear mappings preserving the completely positive rank,
{\it European J. Combin.}, {\bf 109} (2023), 103641.


\end{thebibliography}
\end{document}